\newtheorem{theorem}{Theorem}[section]
\newtheorem{lemma}[theorem]{Lemma}
\newtheorem{proposition}[theorem]{Proposition}
\newenvironment{proof}{	   
  \noindent
  \textbf{Proof.}}{
  \hfill $\Box$
  \vspace{3mm}
}
\numberwithin{equation}{section}
\newcommand{\N}{\mathbb{N}} 
\newcommand{\C}{\mathbb{C}} 
\newcommand{\D}{\mathbb{D}} 
\def\({\left(}       \def\){\right)}
\begin{document}

\title{Monomial basis in Korenblum type spaces of analytic functions. }

\author{Jos\'e Bonet, Wolfgang Lusky and Jari Taskinen}

\date{}

\maketitle

\begin{abstract}
It is shown that the monomials $\Lambda=(z^n)_{n=0}^{\infty}$ are a Schauder basis of the Fr\'echet spaces $A_+^{-\gamma}, \ \gamma \geq 0,$ that consists  of all the analytic functions $f$ on the unit disc such that $(1-|z|)^{\mu}|f(z)|$ is bounded for all $\mu > \gamma$. Lusky \cite{L} proved that $\Lambda$ is not a Schauder basis for the closure of the polynomials in weighted Banach spaces of analytic functions of type $H^{\infty}$. A sequence space representation of the Fr\'echet space $A_+^{-\gamma}$ is presented. The case of (LB)-spaces $A_{-}^{-\gamma}, \ \gamma > 0,$ that are defined as unions of weighted Banach spaces is also studied.
\end{abstract}

\renewcommand{\thefootnote}{}
\footnotetext{\emph{2010 Mathematics Subject Classification.}
Primary: 46E10, 46A35, 46A45, 46E15}%
\footnotetext{\emph{Key words and phrases.} Weighted Banach spaces, analytic functions, Fr\'echet spaces, bases, K\"othe echelon spaces.}%

\section{Introduction and preliminaries}

We consider analytic functions $f \in H(\D)$ on the unit complex disc $\mathbb{D}=
	\{ z \in \mathbb{C} : |z| <1 \}$. For a function $f:\mathbb{D} \rightarrow \mathbb{C}$ and $0 \leq r < 1 $ we put $M_{\infty}(f,r) = \sup_{|z|=r}|f(z)|$. If $f$ is analytic then $M_{\infty}(f,r)$ is increasing with respect to $r$. For
	$\mu > 0$ let
	\[ ||f||_{\mu} = \sup_{0 \leq r < 1}M_{\infty}(f,r)(1-r)^{\mu} \]
	and $A^{-\mu}= \{ f:\mathbb{D} \rightarrow \mathbb{C} : f \mbox{ analytic }, ||f||_{\mu} < \infty \}$. Moreover let
	\[ A^{-\mu}_0 = \{ f \in A^{-\mu} : \lim_{r \rightarrow 1}M_{\infty}(f,r)(1-r)^{\mu} =0 \} \]
	and for $\gamma \in [0,\infty[$
	\[A^{-\gamma}_+ = \cap_{ \mu > \gamma} A^{-\mu} =\cap_{ \mu > \gamma} A^{-\mu}_0. \]
	We consider the norms $|| \cdot ||_{\mu}$, $\mu > \gamma$, with which $ A^{-\gamma}_+$ becomes a Frechet space. By definition we have
	\[  || \cdot ||_{\mu_1} \leq || \cdot ||_{\mu_2} \ \ \mbox{ and } A^{-\mu_2} \subset
	A^{-\mu_1} \ \ \mbox{   whenever }
	\mu_1 > \mu_2. \]
	Similarly, for $\gamma \in ]0,\infty]$, let
	\[A^{-\gamma}_- := \cup_{ \mu < \gamma} A^{-\mu} =\cup_{ \mu < \gamma} A^{-\mu}_0 \]	
	be endowed with the finest locally convex topology such that all inclusions $A^{-\mu} \subset A^{-\gamma}_-$ 	are continuous. With this topology $A^{-\gamma}_-$ is an (LB)-space, i.e.\ a Hausdorff
countable inductive limit of Banach spaces.
	
The \textit{Korenblum space} $A_-^{-\infty}$, denoted simply by $ A^{-\infty}$ \cite{Kor},  is defined via
\[
A^{-\infty}:=\cup_{0<\gamma<\infty}A^{-\gamma}=\cup_{n\in\N}A^{-n}.
\]

Spaces of this type play a relevant role in interpolation and sampling of analytic functions, see \cite{HKZ}.
Weighted spaces of analytic functions appear in the study of growth conditions of
analytic functions and have been investigated in various articles since
the work of Shields and Williams, see  e.g.\  \cite{BBG},\cite{BBT},
\cite{L}, \cite{SW} and the references therein.

Our notation for functional analysis is standard; see e.g.\ \cite{MV}. We recall that 
a sequence $(x_n)_n$ in a locally convex space $E$ is a \textit{Schauder basis} if every element $x \in E$ can be written in a unique way as $x = \sum_{n=1}^{\infty} u_n(x) x_n$ with $u_n: E \rightarrow \mathbb{K}, n \in \N,$ continuous linear forms.
We refer the reader to \cite{LT} for more information about Schauder bases in Banach spaces and to \cite{J} for Schauder bases on locally convex spaces. 

Let $e_n(z) = z^n$, $z \in \mathbb{D}$, for $n=0,1,2, \ldots$ and $\Lambda = \{ e_n : n=0,1,2, \ldots\}$. The second author proved in \cite{L} that $\Lambda$ is not a Schauder basis for any $ A^{-\mu}_0 $ and in more general weighted Banach spaces of analytic functions.
On the other hand,  the monomials $(e^n)_n$ constitute a Schauder basis of the space $A^{- \infty}$.
In fact associating each $f(z) = \sum_{n=0}^{\infty} a_n z^n \in A^{-\infty}$ to the sequence $(a_n)_n$
of Taylor coefficients defines a linear topological isomorphism from $A^{- \infty}$ into the strong dual $s'$ of the Fr\'echet echelon space $s$ of rapidly decreasing sequences.

The purpose of this note is to answer the following two questions:

\textbf{Question 1:} Are the monomials a Schauder basis of the spaces $A_+^{-\gamma}$ and $A_-^{-\gamma}$ for $\gamma \neq \infty$?

\textbf{Question 2:} Are there sequence space representations of the spaces $A_+ ^{- \gamma}$ for $0 \le \gamma < \infty $, (resp.\ $A_- ^{- \gamma}$, for $0 < \gamma < \infty $) as K\"othe echelon (resp.\ K\"othe co-echelon) spaces of order $0$?

In connection with question 2, recall that  the Banach spaces $A_0^{-\mu}$ and $A^{-\mu}$ are isomorphic to $c_0$ and $\ell_{\infty}$ respectively \cite{SW}, although the monomials are not a Schauder basis of them \cite{L}.

Question 1 is answered positively in Theorem \ref{mainbasis} and question 2 is dealt with in Section \ref{sequence}; see Theorem \ref{sequence_space_rep}.

\section{ Monomial bases}

The following lemma is easy to prove.

\begin{lemma}\label{max}
Let $\mu > 0$ and $N>0$. The function $r^N(1-r)^{\mu}$, $0 \leq r \leq 1$ has a global maximum point at $r$ if and only if $N = \mu r(1-r)^{-1}$.
\end{lemma}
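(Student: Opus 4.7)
The plan is a direct calculus computation, reflecting the author's remark that the lemma is easy.

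First I would set $g(r) := r^N(1-r)^{\mu}$ for $r \in [0,1]$ and observe that $g$ is continuous on $[0,1]$ with $g(0)=g(1)=0$ and $g(r)>0$ on $(0,1)$, so the global maximum is attained at some interior point $r \in (0,1)$. Consequently the maximum point must satisfy $g'(r)=0$, and conversely any interior critical point is a candidate for the global maximum.

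Next I would take the logarithmic derivative on $(0,1)$: since $\log g(r) = N\log r + \mu \log(1-r)$, we have
\[
\frac{g'(r)}{g(r)} \;=\; \frac{N}{r} - \frac{\mu}{1-r}.
\]
Setting the right-hand side equal to zero and clearing denominators yields $N(1-r)=\mu r$, i.e.\ $N = \mu r(1-r)^{-1}$, which is precisely the stated equation.

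To establish the "if and only if", I would argue uniqueness: the function $r \mapsto N/r - \mu/(1-r)$ is strictly decreasing on $(0,1)$ (each summand is strictly decreasing), so $g'$ has exactly one zero in $(0,1)$. Since $g$ attains its global maximum in the interior, that unique critical point is the unique global maximum point, giving the equivalence.

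There is no real obstacle here; the only thing to be careful about is justifying that the interior critical point \emph{is} the maximum (not merely a critical point), which follows from the boundary values $g(0)=g(1)=0$ combined with the uniqueness of the critical point on $(0,1)$.
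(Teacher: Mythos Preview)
Your argument is correct and is precisely the natural calculus computation one would expect; the paper itself omits the proof entirely, stating only that the lemma is easy to prove. There is nothing to add or compare.
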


For $n > \mu>0$ put $\rho_{n,\mu} = 1 - \frac{\mu}{n}$. Then $\rho_{n,\mu}$ is the global maximum point of $r^{n- \mu}(1-r)^{\mu}$.

\begin{lemma} \label{lemma_basis}
Let $n \in \mathbb{N}$, $n > \mu$. Consider  $f:\mathbb{D} \rightarrow \mathbb{C}$ analytic with
		$ f(z) = \sum_{k=n}^{\infty} a_k z^k$. Then
		\[ ||f||_{\mu} = \sup_{\rho_{n,\mu} \leq r < 1} M_{\infty}(f,r)(1-r)^{\mu}.  \]
\end{lemma}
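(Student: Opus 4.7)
The plan is to factor out $z^n$ and exploit monotonicity of both factors separately. Write $f(z) = z^n g(z)$ with $g(z) = \sum_{k=0}^\infty a_{n+k}z^k$ analytic on $\mathbb{D}$. Then for every $0 \le r < 1$,
\[
M_\infty(f,r) = \sup_{|z|=r}|z|^n|g(z)| = r^n M_\infty(g,r),
\]
and $M_\infty(g,r)$ is an increasing function of $r$ by the maximum modulus principle.

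Now I would analyze the auxiliary function $h(r) = r^n(1-r)^\mu$ on $[0,1)$. A direct computation of $h'(r)$ (or Lemma \ref{max} applied with $N=n$) shows that $h$ is strictly increasing on $[0, r^*]$, where $r^* = n/(n+\mu)$. The key numerical observation is
\[
\rho_{n,\mu} \;=\; 1 - \frac{\mu}{n} \;\le\; 1 - \frac{\mu}{n+\mu} \;=\; r^*,
\]
so $h$ is increasing on the interval $[0, \rho_{n,\mu}]$. Combined with the monotonicity of $M_\infty(g,\cdot)$, this gives that $r \mapsto M_\infty(f,r)(1-r)^\mu = M_\infty(g,r)\, h(r)$ is increasing on $[0, \rho_{n,\mu}]$. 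Hence for every $r \in [0, \rho_{n,\mu}]$,
\[
M_\infty(f,r)(1-r)^\mu \;\le\; M_\infty(f, \rho_{n,\mu})(1-\rho_{n,\mu})^\mu,
\]
which shows that the supremum defining $\|f\|_\mu$ over $[0,1)$ coincides with the supremum over $[\rho_{n,\mu}, 1)$.

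The only mildly nontrivial step is checking that $\rho_{n,\mu}$ lies to the left of the maximum of $h$, but this is the elementary inequality $n^2 > n^2-\mu^2$, which presents no real obstacle. The lemma is therefore essentially an observation that the two ingredients $r^n$ (coming from the vanishing of $f$ at $0$ to order $n$) and the weight $(1-r)^\mu$ work together to push any potential extremal values of $M_\infty(f,r)(1-r)^\mu$ past $\rho_{n,\mu}$.
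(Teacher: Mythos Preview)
Your proof is correct and follows essentially the same strategy as the paper: factor $f(z)=z^n g(z)$, use the monotonicity of $M_\infty(g,\cdot)$, and show that the remaining weight factor is increasing up to $\rho_{n,\mu}$. The only difference is a minor technical one: the paper bounds $r^n(1-r)^\mu$ by passing to $r^{n-\mu}(1-r)^\mu$ (whose maximum is exactly $\rho_{n,\mu}$, by Lemma~\ref{max} with $N=n-\mu$), whereas you work directly with $h(r)=r^n(1-r)^\mu$ and observe that its maximum $r^*=n/(n+\mu)$ satisfies $\rho_{n,\mu}\le r^*$, so $h$ is still increasing on $[0,\rho_{n,\mu}]$. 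Your route is arguably cleaner, since it avoids the intermediate exponent shift $n\to n-\mu$.
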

\begin{proof}
Let $g(z)= z^{-n}f(z)$. Then,  $g$ can be regarded as analytic function on $ \mathbb{D}$ (with the
	natural extension to $0$). We obtain, for $0 \leq r < 	\rho_{n,\mu}$,
	\begin{eqnarray*}
	M_{\infty}(f,r)(1-r)^{\mu} & =     &  r^n M_{\infty}(g,r)(1-r)^{\mu} \\
	                           & \leq  &  \left(\frac{r}{\rho_{n,\mu}}\right)^n \left(\frac{1-r}{1-\rho_{n,\mu}}\right)^{\mu}
	                        \rho_{n,\mu}^n   M_{\infty}(g,\rho_{n,\mu})(1-\rho_{n,\mu})^{\mu}  \\
                               & \leq 	 &
 \left(\frac{r}{\rho_{n,\mu}}\right)^{n- \mu} \left(\frac{1-r}{1-\rho_{n,\mu}}\right)^{\mu}
 \rho_{n,\mu}^n   M_{\infty}(g,\rho_{n,\mu})(1-\rho_{n,\mu})^{\mu}  \\
                               & \leq  & M_{\infty}(f,\rho_{n,\mu})(1-\rho_{n,\mu})^{\mu} ,
                        			\end{eqnarray*}
where we have used  the fact that $\rho_{n,\mu}$ is the global maximum point of $r^{n- \mu}(1-r)^{\mu}$.
\end{proof}

\begin{proposition} \label{prop_basis}
Let  $\mu_0 > 0$ and  $\mu > \mu_0$.
	 Then, for any $f \in  A^{-\mu_0}$ the Taylor series of $f$ converges
		to $f$ with respect to $|| \cdot ||_{\mu}$.
\end{proposition}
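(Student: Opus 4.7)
The plan is to estimate the tail $T_Nf:=f-S_Nf=\sum_{k\ge N+1}a_kz^k$ in $\|\cdot\|_\mu$ and show $\|T_Nf\|_\mu\to 0$, by combining Lemma~\ref{lemma_basis} with the classical $L^1$-bound on the Dirichlet kernel. As soon as $N+1>\mu$, Lemma~\ref{lemma_basis} applied to $T_Nf$ (whose Taylor series starts at degree $N+1$) gives
\[
\|T_Nf\|_\mu=\sup_{\rho_{N+1,\mu}\le r<1}M_\infty(T_Nf,r)(1-r)^\mu,
\]
so only radii with $1-r\le\mu/(N+1)$ contribute; in particular $(1-r)^{\mu-\mu_0}\le(\mu/(N+1))^{\mu-\mu_0}$ throughout this effective range.

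Next I would relate $M_\infty(T_Nf,r)$ to $M_\infty(f,r)$. The key observation is that for each fixed $r\in[0,1)$, $S_Nf(re^{i\theta})$ is the $N$-th trigonometric partial sum of the periodic function $\theta\mapsto f(re^{i\theta})$, and hence equals its convolution on $\T$ with the one-sided Dirichlet kernel $D_N^+(\tau)=\sum_{k=0}^Ne^{ik\tau}$. The classical Lebesgue-constant estimate $\|D_N^+\|_{L^1(\T)}\le C\log(N+2)$ therefore yields
\[
M_\infty(S_Nf,r)\le C\log(N+2)\,M_\infty(f,r),
\]
and the triangle inequality then gives $M_\infty(T_Nf,r)\le C'\log(N+2)\,M_\infty(f,r)$.

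Combining the two ingredients with the trivial splitting $M_\infty(f,r)(1-r)^\mu\le\|f\|_{\mu_0}(1-r)^{\mu-\mu_0}$ yields
\[
\|T_Nf\|_\mu\le C'\log(N+2)\,\|f\|_{\mu_0}\,\bigl(\mu/(N+1)\bigr)^{\mu-\mu_0}\longrightarrow 0
\]
as $N\to\infty$, because $\mu>\mu_0$ makes the algebraic gain $(N+1)^{\mu-\mu_0}$ swamp the logarithmic loss.

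The delicate step is the middle one. A purely termwise argument based on the Cauchy coefficient bound $|a_k|\lesssim k^{\mu_0}\|f\|_{\mu_0}$, or equivalently a direct Cauchy estimate of $T_Nf(z)$ on a contour $|w|=\rho$ with $\rho>r$, only produces a factor of order $(1-r)^{-\mu_0-1}$ inside the supremum and therefore succeeds solely in the easy range $\mu>\mu_0+1$. One has to exploit the cancellation within the Taylor sum, and the Dirichlet $L^1$-bound provides exactly the right amount. The logarithmic loss is unavoidable: at the endpoint $\mu=\mu_0$ the estimate above reduces to $C\log(N+2)\|f\|_{\mu_0}$, which does not tend to $0$, in agreement with Lusky's theorem that $(z^n)$ is not a Schauder basis of $A^{-\mu_0}_0$.
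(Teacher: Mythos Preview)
Your proof is correct and follows essentially the same route as the paper: apply Lemma~\ref{lemma_basis} to the tail $f-S_Nf$ to restrict to radii $r\ge\rho_{N+1,\mu}$, invoke the Lebesgue--constant bound $M_\infty(S_Nf,r)\le C\log(N+2)\,M_\infty(f,r)$, and then let the factor $(1-r)^{\mu-\mu_0}\le(\mu/(N+1))^{\mu-\mu_0}$ kill the logarithmic loss. Your additional remarks on why the naive Cauchy-estimate approach only works for $\mu>\mu_0+1$ and on the sharpness at $\mu=\mu_0$ are a nice bonus.
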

\begin{proof}
Let $P_n$ be the Dirichlet projections, i.e. $P_nf$ is the $n$'th partial sum of the Taylor series of $f$. It is well known that there is  a universal constant $c > 0$ such that for every analytic function $f$, every $n$ and every radius $r$ have
                        		\[  M_{\infty}(P_nf,r) \leq c \log(n)  M_{\infty}(f,r).  \]
See e.g.\ \cite{T}.

We obtain, for $f \in A^{-\mu_0}$,
      \[ ||f-P_nf ||_{\mu_0} \leq c (1+\log(n)) ||f||_{\mu_0}. \]
If $f(z) =\sum_{k=0}^{\infty} a_k z^k$ then $(id-P_n)f(z)=\sum_{k=n+1}^{\infty} a_k z^k$. For $ \mu > \mu_0$ we
apply Lemma \ref{lemma_basis} to get
      \begin{eqnarray*}
      	||(id-P_n)f||_{\mu} & =       & \sup_{\rho_{n+1,\mu} \leq r < 1} M_{\infty}((id-P_n)f,r)(1-r)^{\mu}  \\
      	                    & \leq    & \sup_{\rho_{n+1,\mu} \leq r < 1} (1-r)^{\mu - \mu_0} ||(id-P_n)f||_{\mu_0} \\
      	                    & \leq    & (1-\rho_{n+1,\mu})^{\mu - \mu_0}(1+ \log(n))||f||_{\mu_0} \\
      	                    & =       & \left(\frac{\mu}{n+1} \right)^{\mu - \mu_0}	(1+ \log(n))||f||_{\mu_0}.
      \end{eqnarray*}
  Since $\mu - \mu_0 > 0$ the right-hand side goes to 0 if $n \rightarrow \infty$. This proves the proposition.
\end{proof}

\begin{theorem}\label{mainbasis}
\begin{itemize}
\item[(i)]	$\Lambda$ is a Schauder basis of $A^{-\gamma}_+$ for any $\gamma \geq 0$.

\item[(ii)]	$\Lambda$ is a Schauder basis of $A^{-\gamma}_-$ for any $\gamma >0$.
\end{itemize}
\end{theorem}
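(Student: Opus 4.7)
The plan is to deduce both parts directly from Proposition \ref{prop_basis}, combined with a continuity argument for the coefficient functionals and appropriate use of the Fr\'echet/(LB) topologies.

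First I would show that the $n$-th Taylor coefficient functional $u_n : f \mapsto a_n$ (where $f(z)=\sum_k a_k z^k$) is continuous on each Banach space $A^{-\mu}$, hence on $A_+^{-\gamma}$ by the definition of its projective topology, and on $A_-^{-\gamma}$ by the universal property of the inductive limit. Continuity on $A^{-\mu}$ follows from the Cauchy estimate $|a_n| \leq r^{-n} M_\infty(f,r) \leq r^{-n}(1-r)^{-\mu}\|f\|_\mu$ for any fixed $0<r<1$. This also immediately implies that if a series $\sum c_n z^n$ converges to $f$ in either space, then (since pointwise evaluation is continuous on each $A^{-\mu}$) the $c_n$ must coincide with the Taylor coefficients, giving uniqueness of the representation.

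For part (i), fix $f \in A_+^{-\gamma}$ and any seminorm $\|\cdot\|_\mu$ with $\mu > \gamma$. Choose $\mu_0$ with $\gamma < \mu_0 < \mu$; then $f \in A^{-\mu_0}$ by definition of $A_+^{-\gamma}$, and Proposition \ref{prop_basis} yields $\|f - P_n f\|_\mu \to 0$. Since $\mu > \gamma$ was arbitrary, the Taylor series of $f$ converges to $f$ in the Fr\'echet topology of $A_+^{-\gamma}$, and together with uniqueness and continuity of the $u_n$ this establishes the Schauder basis property.

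For part (ii), fix $f \in A_-^{-\gamma}$. By definition there exists $\mu_0 < \gamma$ with $f \in A^{-\mu_0}$. Pick any $\mu$ with $\mu_0 < \mu < \gamma$. Proposition \ref{prop_basis} then gives $\|f - P_n f\|_\mu \to 0$, i.e.\ the Taylor series converges to $f$ in $A^{-\mu}$. Since the inclusion $A^{-\mu} \hookrightarrow A_-^{-\gamma}$ is continuous by the definition of the inductive topology, convergence persists in $A_-^{-\gamma}$, completing the proof.

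The argument is essentially mechanical once Proposition \ref{prop_basis} is in hand; the only mildly delicate point is (ii), where one must first locate a single step $\mu_0$ for which $f$ belongs to $A^{-\mu_0}$ and then exploit the continuity of the step inclusions rather than working directly with the inductive-limit seminorms.
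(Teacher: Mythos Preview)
Your proposal is correct and follows essentially the same route as the paper's own proof: both parts are deduced directly from Proposition \ref{prop_basis} by inserting an intermediate exponent between $\gamma$ and the given seminorm index, with part (ii) handled via the continuity of the step inclusions into the inductive limit. You are in fact slightly more thorough than the paper, which does not explicitly verify the continuity of the coefficient functionals or the uniqueness of the expansion.
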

\begin{proof}
(i) We have to prove that the Taylor series of every $f \in A^{-\gamma}_+, \gamma \geq 0$ converges in
$A^{-\gamma}_+$ to $f$. Fix $\mu > \gamma$ and select $\mu$ with $\gamma < \mu_1 < \mu$. Since $f \in A^{-\mu_1}$, we can apply Proposition \ref{prop_basis} to conclude that the Taylor series of $f$ converges
in $A^{\mu}$ to $f$. This implies the conclusion.

(ii) is a direct consequence of Proposition \ref{prop_basis} and the properties of inductive limits. \end{proof}

It is well-known that the Korenblum space $A^{- \infty}$   is nuclear, since it is isomorphic to the nuclear (LB)-space $s'$. The following result is proved in \cite{ABR}.

\begin{proposition} \label{pra}
Each  Fr\'echet space  $A_+ ^{- \gamma}$ for $0 \le \gamma < \infty $, and each {\rm(LB)}-space $A_- ^{- \gamma}$, for $0 < \gamma < \infty $, fails to be nuclear.
\end{proposition}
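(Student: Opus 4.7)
My plan is to contradict nuclearity using the standard characterisation that a Fréchet space is nuclear iff for every continuous seminorm $p$ there is a stronger continuous seminorm $q\geq p$ whose associated canonical map between local Banach spaces is nuclear, together with the analogous criterion for (LB)-spaces. A preliminary observation is that the local Banach spaces for $A_+^{-\gamma}$ are easy to identify: since polynomials already lie in $A_+^{-\gamma}$ and are $\|\cdot\|_\mu$-dense in $A_0^{-\mu}$, the local Banach space of $A_+^{-\gamma}$ at $\|\cdot\|_\mu$ coincides with $A_0^{-\mu}$, and the canonical map from the local space at the stronger seminorm $\|\cdot\|_\mu$ to the one at the weaker seminorm $\|\cdot\|_\nu$ (for $\gamma<\mu<\nu$) is the continuous inclusion $J_{\mu,\nu}\colon A_0^{-\mu}\hookrightarrow A_0^{-\nu}$. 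For the (LB)-space $A_-^{-\gamma}$ the step spaces are the $A^{-\mu}$ with $\mu<\gamma$, and the connecting maps are again the inclusions $A^{-\mu}\hookrightarrow A^{-\nu}$ for $\mu<\nu<\gamma$.

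The heart of the proof is a lower bound on the Bernstein numbers $b_n$ of these inclusions: I would show that, whenever $\gamma<\mu<\nu$, one has $b_n(J_{\mu,\nu})\geq c_{\mu,\nu}\,n^{\mu-\nu-1/2}$. To obtain it I test $J_{\mu,\nu}$ on the $n$-dimensional subspace $V_n=\operatorname{span}\{z^k\,:\,n\leq k<2n\}$. For $P(z)=\sum_{k=n}^{2n-1}a_k z^k$, Parseval's identity applied at $r=1-1/n$ yields
\[
\|P\|_\nu \;\geq\; (1-r)^\nu M_\infty(P,r)\;\geq\; c_1\,n^{-\nu}\Bigl(\textstyle\sum_k|a_k|^2\Bigr)^{1/2},
\]
while Cauchy--Schwarz together with the optimal-radius analysis of Lemma~\ref{max} gives
\[
\|P\|_\mu\;\leq\;c_2\,n^{1/2-\mu}\Bigl(\textstyle\sum_k|a_k|^2\Bigr)^{1/2}.
\]
Dividing these two estimates produces the claimed Bernstein-number bound. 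Since every nuclear operator $T$ between Banach spaces satisfies $\sum_n b_n(T)\leq\sum_n a_n(T)\leq\|T\|_\nu$ (nuclear norm), the bound forces $\nu-\mu>1/2$ whenever $J_{\mu,\nu}$ is nuclear.

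The proposition then follows. For $A_+^{-\gamma}$ I take the defining sequence $\mu_k=\gamma+1/k$; for every $k\geq 2$ and every $m\geq k$ one has $\mu_k-\mu_m\leq\mu_k-\gamma\leq 1/2$, so no $J_{\mu_m,\mu_k}$ can be nuclear and $A_+^{-\gamma}$ cannot be nuclear. Exactly the same obstruction appears in $A_-^{-\gamma}$: for any step $A^{-\mu}$ with $\mu\in(\max\{0,\gamma-1/2\},\gamma)$, no later step $A^{-\mu'}$ with $\mu'<\gamma$ satisfies $\mu'-\mu>1/2$. The principal technical difficulty is matching the $\ell^2$-type lower and upper bounds for $\|P\|_\mu$ on $V_n$: both Parseval (below) and Cauchy--Schwarz (above) need to be essentially saturated at the same radius $r=1-1/n$, and the unavoidable $\sqrt n$ loss from the Cauchy--Schwarz term is precisely what produces the half-power correction in the exponent $\mu-\nu-1/2$; an alternative, morally equivalent route becomes available once the sequence-space representation of Theorem~\ref{sequence_space_rep} is at hand, since non-nuclearity then reduces to the failure of the Grothendieck--Pietsch criterion $\sum_n(1+n)^{\mu_m-\mu_k}<\infty$ for the matrix $a_{n,k}=(1+n)^{-\mu_k}$.
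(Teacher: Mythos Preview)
Your Bernstein-number approach is substantially different from the paper's and contains a gap. The inequality $\sum_n a_n(T)\le\|T\|_{\mathrm{nuc}}$ that you invoke for a nuclear operator $T$ between arbitrary Banach spaces is not valid in general: outside the Hilbert-space setting there exist nuclear operators whose approximation numbers fail to be summable, so the implication ``$J_{\mu,\nu}$ nuclear $\Rightarrow \sum_n b_n(J_{\mu,\nu})<\infty$'' is not justified as written. The strategy can be salvaged, because what \emph{is} true is the space-level characterisation: a Fr\'echet space is nuclear iff for every continuous seminorm $p$ there is a stronger $q$ with $(a_n(\pi_{q,p}))_n\in\ell_1$ (indeed in every $\ell_s$, by iterating the linking maps). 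Combining this with your correct estimate $b_n(J_{\mu,\nu})\ge c\,n^{\mu-\nu-1/2}$ and the elementary inequality $b_n\le a_n$ then yields the contradiction at $\nu=\gamma+\tfrac12$.

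The paper's own argument is a two-line application of Theorem~\ref{mainbasis} together with the Grothendieck--Pietsch criterion. Once the monomials are known to be a Schauder basis, nuclearity of $A_+^{-\gamma}$ would force $\sum_n\|z^n\|_{\mu}/\|z^n\|_{\nu}<\infty$ for some $\gamma<\nu<\mu:=\gamma+1$; since $\|z^n\|_{\mu}/\|z^n\|_{\nu}\asymp n^{\nu-\mu}$ by Lemma~\ref{max} and $0<\mu-\nu<1$, the series diverges. Note that only Theorem~\ref{mainbasis} is needed here, not the full sequence-space isomorphism of Theorem~\ref{sequence_space_rep} to which you allude in your closing remark: by the Dynin--Mitiagin theorem a nuclear Fr\'echet space with a Schauder basis is automatically a K\"othe space, so the criterion applies directly. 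Your route, once repaired, is considerably more laborious and loses a half-power to Cauchy--Schwarz, but it does have the merit of not relying on the convergence of the Taylor series in the space.
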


This result is now a direct consequence of Theorem \ref{mainbasis} and Grothendieck Pietsch criterion \cite[Theorem 28.15]{MV}. We indicate the argument for $A_+ ^{- \gamma}$: If this Fr\'echet space is nuclear, given $\mu:= \gamma +1$, we can apply \cite[Theorem 28.15]{MV} to find $\gamma < \nu < \mu$ such that
$\sum_{n=1}^{\infty} \frac{||z^n||_{\mu}}{||z^n||_{\nu}} < \infty$. This implies by Lemma \ref{max} that $\sum_{n=1}^{\infty} \frac{1}{n^{\mu-\nu}} < \infty$. A contradiction, since $0<\mu-\nu<1$.

\section{Sequence space representation} \label{sequence}

We recall the definition of K\"othe echelon and co-echelon spaces of order infinity; see \cite{BMS} and \cite[Chapter 27]{MV}. A sequence  $A = (a_k)_k$ of functions $a_k: \N \cup \{0\} \to ]0, \infty)$  is called a \textit{K\"{o}the matrix} on $\N$ if  $0 < a_k (j) \le a_{k+1} (j)$ for all $j \in \N\cup \{0\}$ and $k\in\N$. The \textit{K\"{o}the echelon space of order infinity} associated to  $A$ is
\[
\lambda_{\infty} (A) : = \{x \in \C^{\N} :  \sup_{j  }a_k (j) x_j < \infty , \ \forall k \in\N\},
\]
which is a Fr\'echet space relative to the increasing sequence of canonical seminorms
$$
q^{(\infty)}_k (x) := \sup_{j } a_k (j ) |x_j |, \quad  x\in \lambda_{\infty} (A), \quad k \in \N.
$$
Then $\lambda_{\infty}(A)=\cap_{k \in \N} \ell_{\infty}(a_k)$. Here $\ell_{\infty}(a_k)$ is the usual weighted $\ell_{\infty}$ sequence space.

Given a decreasing sequence $V=(v_k)_k$ of strictly positive functions on $\N\cup \{0\}$, the K\"othe co-echelon space of order infinity is $k_{\infty}(V):= {\rm ind}_k \ell_{\infty}(v_k)$ and it is endowed with the inductive limit topology. Then $k_{\infty}(V)$ is a regular (LB)-space \cite{BMS}.

Given $\mu \in ]0,\infty[$ define $r_{\mu}(0)=s_{\mu}(0):=1$ and
$$
r_{\mu}(j):= \frac{\mu}{2^n + \mu} \ \ \ j=2^n,...,2^{n+1}-1, \ \ \ n=0,1,2,...
$$
and
$$
s_{\mu}(j):= \frac{\mu}{j + \mu} \ \ \ j=1,2,...
$$

\begin{lemma}\label{lem_decr}
If $0<\mu_2 < \mu_1$, then $r_{\mu_1}(j) \leq r_{\mu_2}(j)$ and $s_{\mu_1}(j) \leq s_{\mu_2}(j)$ for each $j=0,1,2,...$
\end{lemma}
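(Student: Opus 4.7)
The plan is to reduce both inequalities to a single elementary monotonicity statement about the real-variable function $\mu \mapsto \mu/(c+\mu)$ on $]0,\infty[$, where $c \ge 0$ is an auxiliary parameter. First, the $j=0$ case is immediate from the definitions: both $r_{\mu}(0)$ and $s_{\mu}(0)$ are set equal to $1$ independently of $\mu$, so both inequalities hold as equalities. Thus the entire content of the lemma lies in the range $j \ge 1$.

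For the $s$-case with $j \ge 1$, I would not appeal to calculus but carry out the direct cross-multiplication. The inequality $s_{\mu_1}(j) \le s_{\mu_2}(j)$ is equivalent, after clearing the (positive) denominators $j+\mu_1$ and $j+\mu_2$, to $\mu_1(j+\mu_2) \le \mu_2(j+\mu_1)$. The $\mu_1\mu_2$ terms cancel, so this reduces to a linear comparison of the form $\mu_1 j$ versus $\mu_2 j$, which is the single nontrivial step of the whole proof.

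The $r$-case is then obtained for free from the piecewise structure of $r_{\mu}$. For $j$ in the dyadic block $\{2^n, 2^n+1, \ldots, 2^{n+1}-1\}$ we have $r_{\mu}(j) = \mu/(2^n+\mu) = s_{\mu}(2^n)$, where the right-hand side is evaluated at the block endpoint. Hence the required pointwise inequality $r_{\mu_1}(j) \le r_{\mu_2}(j)$ on the whole block is nothing but the $s$-inequality applied at the single index $j=2^n$, and varying $n$ exhausts all $j \ge 1$. Combining the $j=0$ case, the $s$-comparison, and the dyadic reduction gives both claims.

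The step I would want to double-check most carefully is the direction of the sign in the cross-multiplication, since the function $\mu \mapsto \mu/(j+\mu)$ can be rewritten as $1 - j/(j+\mu)$ and the behavior in $\mu$ must be reconciled with the stated order; everything else is bookkeeping about the piecewise definition of $r_\mu$.
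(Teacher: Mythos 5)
Your reduction of the $r$-inequality to the $s$-inequality at the block endpoint $j=2^n$, and your dismissal of the $j=0$ case, are both fine; the trouble is that your ``single nontrivial step'' goes the wrong way. Cross-multiplying $\frac{\mu_1}{j+\mu_1}\le\frac{\mu_2}{j+\mu_2}$ gives $\mu_1 j\le \mu_2 j$, which is \emph{false} for $j\ge 1$ because the hypothesis is $\mu_2<\mu_1$. As you yourself half-suspected in your last paragraph, $\mu\mapsto \mu/(j+\mu)=1-j/(j+\mu)$ is \emph{increasing} in $\mu$, so for the formula $s_\mu(j)=\mu/(j+\mu)$ taken literally the claimed inequality is reversed, and no bookkeeping can rescue the argument.

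What is actually intended (and what makes the lemma true) is $s_\mu(j)=\bigl(\mu/(j+\mu)\bigr)^{\mu}$ and $r_\mu(j)=\bigl(\mu/(2^n+\mu)\bigr)^{\mu}$, with an outer exponent $\mu$; the displayed definitions drop this exponent, but every subsequent use restores it: the norms $|||\cdot|||_\mu$ are built from $\bigl(\mu/(2^n+\mu)\bigr)^{\mu}$, the comparison constant $2^{\max(1,\mu)}$ between $r_\mu$ and $s_\mu$ only makes sense with the power, and the paper's own proof of the lemma studies the function $f(x)=\bigl(x/(j+x)\bigr)^{x}$. With the exponent present there is a genuine competition between the increasing base and the growing exponent, and the monotonicity is no longer a linear comparison: the paper shows $f$ is decreasing by checking that $f'(x)\le 0$ is equivalent to
\[
1+\log x-\log(j+x)-\frac{x}{j+x}\le 0,
\]
which follows from the elementary bound $t\le e^{t-1}$ applied to $t=x/(j+x)$. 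You would need to supply an argument of this kind; the cross-multiplication cannot be repaired.
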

\begin{proof}
It is enough to show that  the function
\[ f(x) = \left( \frac{x}{j + x} \right)^x = \exp\left(x \log(x) -x \log(j +x) \right), \ \ \ x > 0, \]
is decreasing.
It is easily seen that $f'(x) \leq 0$ if and only if
\[ 1 + \log(x) - \log(j+x) - \frac{x}{j+x} \leq 0. \]
This inequality is valid for all $x >0$ since $t \leq e^{t-1}$ for each $t \in ]0,1[$ implies
$$
 \frac{x}{j+x} \leq \exp\left( \frac{x}{j+x} -1 \right)
$$
 for all $ x > 0$.
\end{proof}

Given $\gamma \geq 0$, put $\mu_k:= \gamma + \frac{1}{k}, k \in\N,$ and define $a_k(j):=s_{\mu_k}(j), j=0,1,2,..., k \in \N$. Lemma \ref{lem_decr} implies that $A_{\gamma}:=(a_k)_k$ is a K\"othe matrix. Analogously, for $\gamma > 0$, we set $\nu_k= \gamma - \frac{1}{k}$ with $k$ large enough so that $\nu_k>0$. Now, by Lemma \ref{lem_decr} the sequence $V_{\gamma}:=(v_k)_k$, $v_k(j):= s_{\nu_k}(j), j=0,1,2,..., k \in \N$ is decreasing. Keeping this notation, we can state the main result of this section

\begin{theorem}\label{sequence_space_rep}
\begin{itemize}
\item[(i)]	For each $\gamma \geq 0$ the Fr\'echet space $A^{-\gamma}_+$ is isomorphic to the K\"othe echelon space $\lambda_{\infty}(A_{\gamma})$.

\item[(ii)]	For each $\gamma > 0$ the (LB)-space $A^{-\gamma}_-$ is isomorphic to the K\"othe co-echelon space $k_{\infty}(V_{\gamma})$.
\end{itemize}
\end{theorem}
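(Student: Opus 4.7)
My plan is to realize $A^{-\gamma}_+$ concretely as a sequence space by sampling $f$ at a dyadic grid in $\D$. Set $\rho_n = 1 - 2^{-n}$ and, for $n \ge 0$ and $0 \le k < 2^n$, sample points $w_{n,k} = \rho_n e^{2\pi i k/2^n}$; re-index them linearly by $w_j := w_{n, j-2^n}$ for $j \in [2^n, 2^{n+1})$, so that $1-|w_j| = 2^{-n}$. With this choice $(1-|w_j|)^{\mu_k} \asymp r_{\mu_k}(j)$; write $R_\gamma := (r_{\mu_k})_k$ for the resulting dyadic K\"othe matrix.

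The first main step is to prove the equivalence
\[
\|f\|_\mu \ \asymp\ \sup_j (1-|w_j|)^\mu\, |f(w_j)|,
\]
valid for each $\mu > 0$ with constants depending only on $\mu$. The upper bound follows from $|f(w_j)| \le M_\infty(f,|w_j|) \le \|f\|_\mu(1-|w_j|)^{-\mu}$. The lower bound combines (a) the radial discretization $\|f\|_\mu \asymp \sup_n 2^{-n\mu} M_\infty(f,\rho_n)$, which comes from monotonicity of $M_\infty(f,\cdot)$ and bounded $(1-r)^\mu$-ratios on $[\rho_n,\rho_{n+1}]$, with (b) the angular discretization $M_\infty(f,\rho_n) \lesssim_\mu \max_k |f(w_{n,k})|$, which follows from the Cauchy derivative estimate $|f'(z)| \le C_\mu \|f\|_\mu(1-|z|)^{-\mu-1}$ controlling the variation of $|f|$ between adjacent samples on the circle $|z|=\rho_n$; one may need a slightly larger (still $\sim 2^n$) number of samples with a $\mu$-dependent constant, absorbing a small multiple of $\|f\|_\mu$ on the left.

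From this equivalence, the sampling map $\Phi(f):=(f(w_j))_j$ is a topological embedding of $A^{-\mu_k}$ into $\ell_\infty(r_{\mu_k})$, hence of $A^{-\gamma}_+$ into $\lambda_\infty(R_\gamma)$. To upgrade $\Phi$ to an isomorphism onto $\lambda_\infty(R_\gamma)$, I would construct an explicit right inverse $\Psi$ by an atomic decomposition $\Psi((a_j))(z) := \sum_j a_j K_j(z)$, with atoms $K_j(z) = c_j(1-\bar{w}_j z)^{-N}$ for a large enough exponent $N$ and normalizing constants $c_j$ chosen so that $(K_j)$ is (approximately) biorthogonal to point-evaluation at $(w_j)$. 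Standard Forelli-Rudin-type estimates on $\sum_j |K_j(z)|$ give continuity of $\Psi$ into each $A^{-\mu}$, $\mu > \gamma$, and make $\Phi\circ\Psi$ close enough to the identity to be invertible, yielding the isomorphism.

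It remains to identify $\lambda_\infty(R_\gamma) = \lambda_\infty(A_\gamma)$ up to equivalent seminorms: on each dyadic block $[2^n,2^{n+1})$ the weight $s_{\mu_k}(j)$ is monotone in $j$ (Lemma \ref{lem_decr}), agrees with $r_{\mu_k}(j)$ at $j=2^n$, and differs from it by at most a factor of $2^{-\mu_k}$ at $j=2^{n+1}-1$, so the two matrices are equivalent. Part (ii) follows by the same sampling argument at each individual level $\nu_k < \gamma$ combined with the inductive limit structure of $A^{-\gamma}_- = \bigcup_k A^{-\nu_k}$ and $k_\infty(V_\gamma) = \mathrm{ind}_k\, \ell_\infty(v_k)$. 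The main obstacle is the atomic decomposition $\Psi$: choosing $N$, the normalization, and the biorthogonalization so that $\sum a_j K_j$ converges in every seminorm $\|\cdot\|_\mu$, $\mu>\gamma$, simultaneously. This is classical for a single weighted Bergman/Bloch space (Coifman-Rochberg), but the uniform-in-$\mu$ requirement for the projective limit needs careful bookkeeping.
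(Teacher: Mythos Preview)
Your approach via direct point sampling of $f$ is genuinely different from the paper's, but it cannot work as stated: the sampling map $\Phi(f)=(f(w_j))_j$ is never surjective onto $\lambda_\infty(R_\gamma)$. By Jensen's formula a nonzero $f\in A^{-\mu}$ has at most $O\big(\log\frac{1}{1-r}\big)$ zeros in $\{|z|\le r\}$, whereas your lattice already has $\sim(1-r)^{-1}$ points there; hence no nonzero $f\in A^{-\gamma}_+$ can vanish on $\{w_j:j\ge1\}$, and the unit vector $(1,0,0,\ldots)\in\lambda_\infty(R_\gamma)$ is not in the range of $\Phi$. Consequently $\Phi\circ\Psi$ can never be surjective, let alone invertible, no matter how the atoms $K_j$ are chosen. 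The Coifman--Rochberg machinery you invoke actually runs in the opposite direction: it shows that $\Psi\circ(\text{weighted sampling})$ is close to the identity on the \emph{function} space, giving that $A^{-\mu}$ sits complemented inside $\ell_\infty$, not that the sampling map itself is onto the sequence space. (Relatedly, your absorption argument for the angular discretization with exactly $2^n$ samples per circle does not give a small constant; the hyperbolic spacing is $\sim 2\pi$, so the Cauchy-estimate error term is comparable to $\|f\|_\mu$, not a small multiple of it.)

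The paper avoids this obstruction by sampling not $f$ itself but its dyadic Taylor blocks $f_n(z)=\sum_{j=2^n}^{2^{n+1}-1}a_jz^j$ at the $2^n$-th roots of unity on $\partial\D$, setting $(Tf)(j)=f_n(e^{2\pi ij/2^n})$ for $2^n\le j<2^{n+1}$. Since $z^{-2^n}f_n$ is a polynomial of degree $<2^n$ evaluated at $2^n$ distinct points, Lagrange interpolation makes $T$ automatically bijective; only the continuity estimates remain. These (Lemma~\ref{technical}) necessarily lose an index --- one obtains $|||Tf|||_\mu\le C\|f\|_{\mu_1}$ only for $\mu_1<\mu$, and likewise for $T^{-1}$, because of the $\log n$ growth of Dirichlet partial sums and the polynomial factor in comparing $M_\infty(f_n,1)$ with its root-of-unity samples (Lemma~\ref{estimates}) --- but this loss is precisely what the projective (resp.\ inductive) limit topology absorbs, and is the reason the result holds for $A^{-\gamma}_\pm$ while failing for any single $A^{-\mu}$.
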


The \textbf{proof of the Theorem \ref{sequence_space_rep}} is a consequence of the results presented below.

Firstly, we introduce, for a sequence $(x_j)_{j=0}^{\infty}$ of complex numbers, the norms
	\[ |||(x_j)|||_{\mu} = \sup \left(|x_0|,\sup_{n= 0,1,2, \ldots} \left(\frac{\mu}{2^n+\mu}\right)^{\mu} \sup_{2^n \leq j < 2^{n+1}}	|x_j| \right) = \sup_{j} r_{\mu}(j)|x_j|  \]
	 and define
	\[B_{\gamma} =\{ (x_j) : |||(x_j)|||_{\mu} < \infty \ \mbox{ for all } \mu > \gamma \}. \]
	We consider the locally convex topology on $B_{\gamma}$ generated by the norms $||| \cdot|||_{\mu} $ for all
	$\mu > \gamma$. Finally put
	\[C_{\gamma} =\{ (x_j) : |||(x_j)|||_{\mu} < \infty \ \mbox{ for some } \mu < \gamma \} \]
	endowed with the finest locally convex topology such that the embedding $J_{\mu}: \{  (x_j) : |||(x_j)|||_{\mu} < \infty \} \rightarrow C_{\gamma}$ is continuous for all $ \mu < \gamma$.

Since $s_{\mu}(j) \leq r_{\mu}(j) \leq 2^{\max(1,\mu)} s_{\mu}(j)$ for each $j=0,1,2,...$ it follows that
$B_{\gamma} = \lambda_{\infty}(A_{\gamma})$ and $C_{\gamma} = k_{\infty}(V_{\gamma})$ algebraically and topologically. In order to complete the proof of Theorem \ref{sequence_space_rep}, we must show that $A^{-\gamma}_+$ and $B_{\gamma}$, as well as $A^{-\gamma}_-$ and $C_{\gamma}$,   are isomorphic.

To this end, given $f \in H(\D)$
 with $f(z) = \sum_{j=0}^{\infty} a_j z^j$, put $ f_n(z) = \sum_{j=2^n}^{2^{n+1}-1} a_jz^j$.
Define $(Tf)(0)= a_0$ and
\[ (Tf)(j) =  f_n( e^{i2 \pi j/2^n}) \ \ \ \mbox{ if } \ \ 2^n \leq j \leq 2^{n+1}-1
{\color{blue} \ \ \ (*) } \]
and $Tf = (\ (Tf)(j) \ )_{j=0}^{\infty}$.

The following technical result will be proved at the end of this section.

\begin{lemma}\label{technical}
For each $0 < \mu_1 < \mu < \mu_2$ there are constants $d_1 >0$ and $d_2 > 0$ such that the following holds
\begin{itemize}

\item[(i)] $||| Tf |||_{\mu} \leq d_2 ||f||_{\mu_1}$ for every 	$f \in H(\D)$.

\item[(ii)] For each $x=(x_j)$ such that $||| x |||_{\mu} < \infty$ there is $f \in H(\D)$ such that
$Tf = x$ and $d_1||f||_{\mu_2}\leq ||| x |||_{\mu}$.
\end{itemize}
\end{lemma}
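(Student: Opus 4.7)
The plan rests on the observation that the block $f_n(z)=\sum_{j=2^n}^{2^{n+1}-1}a_jz^j$ factors as $z^{2^n}g_n(z)$ with $g_n$ a polynomial of degree at most $2^n-1$, and that the entries $(Tf)(j)=f_n(e^{2\pi ij/2^n})=g_n(\omega_j)$ for $2^n\le j<2^{n+1}$ are precisely the values of $g_n$ on the $2^n$-th roots of unity. Thus on each dyadic block the operator $T$ is essentially a discrete Fourier transform, and both (i) and (ii) become statements about how well the sampling at roots of unity controls the $L^\infty$ norm of a polynomial living in a dyadic frequency block.

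For part (i), I would first apply the Bernstein--Walsh inequality to the polynomial $f_n$ of degree $\le 2^{n+1}-1$ to transfer a bound from an interior circle to the unit circle: $M_\infty(f_n,1)\le r^{-(2^{n+1}-1)}M_\infty(f_n,r)$. Choosing $r_n:=1-2^{-(n+1)}$ keeps the prefactor uniformly bounded (it tends to $e$). Next, since $f_n=P_{2^{n+1}-1}f-P_{2^n-1}f$ is a difference of Dirichlet projections, the logarithmic bound already cited in the proof of Proposition \ref{prop_basis} gives $M_\infty(f_n,r_n)\le c\,n\,M_\infty(f,r_n)$. Combining with the hypothesis $M_\infty(f,r_n)\le(1-r_n)^{-\mu_1}||f||_{\mu_1}=2^{(n+1)\mu_1}||f||_{\mu_1}$ and the weight bound $(\mu/(2^n+\mu))^\mu\le\mu^\mu\,2^{-n\mu}$ reduces the weighted quantity to $O(n\,2^{n(\mu_1-\mu)})||f||_{\mu_1}$, which is uniformly bounded in $n$ since $\mu_1<\mu$. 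The $j=0$ term is immediate.

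For part (ii), I would construct $g_n$ via the inverse DFT so that $g_n(\omega_j)=x_j$ for $2^n\le j<2^{n+1}$, set $f_n(z):=z^{2^n}g_n(z)$, and define $f(z):=x_0+\sum_n f_n(z)$; by construction $Tf=x$. Lagrange interpolation writes $g_n(z)=2^{-n}\sum_m x_{2^n+m}(1-(z\omega^{-m})^{2^n})/(1-z\omega^{-m})$, and the standard kernel estimate (bound $\sum_m 1/|1-z\omega^{-m}|$ by pairing roots by their distance to $z$) yields $M_\infty(g_n,1)\le Cn\,M_n$, where $M_n:=\max_{2^n\le j<2^{n+1}}|x_j|$. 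The maximum principle then gives $M_\infty(f_n,r)\le Cn\,r^{2^n}M_n\le C'n\,r^{2^n}\,2^{n\mu}\,|||x|||_\mu$. Summing and substituting $t_n:=(1-r)2^n$ transforms $(1-r)^{\mu_2}\sum_n n\,r^{2^n}\,2^{n\mu}$ into $(1-r)^{\mu_2-\mu}\cdot\sum_n n\,t_n^\mu e^{-t_n}$; since the $t_n$ form a geometric progression with ratio $2$, the latter sum is $O(\log(1/(1-r)))$. As $\mu_2>\mu$, the product $(1-r)^{\mu_2-\mu}\log(1/(1-r))$ stays uniformly bounded on $[0,1)$, giving $||f||_{\mu_2}\le d_1^{-1}|||x|||_\mu$. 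Convergence of $\sum_n f_n$ on compacta of $\D$ is immediate from the factor $r_0^{2^n}$, which dominates the polynomial growth $n\cdot 2^{n\mu}$.

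The main obstacle in both parts will be handling the spurious factor of $n$, coming from the $L^1$ norm of the Dirichlet kernel in (i) and from the Lagrange kernel in (ii). This polynomial blow-up must be absorbed by the exponential gain $2^{n(\mu_1-\mu)}$ (respectively $2^{n(\mu_2-\mu)}$) provided by the gap between the norm indices, which is precisely why the lemma demands the strict inequalities $\mu_1<\mu<\mu_2$ rather than the non-strict ones.
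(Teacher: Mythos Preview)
Your proposal is correct and follows the same architecture as the paper's proof: decompose into dyadic blocks $f_n$, pass between $M_\infty(f_n,1)$ and $M_\infty(f_n,r)$ for $r\approx 1-2^{-n}$ via a Bernstein--Walsh inequality, invoke the logarithmic Dirichlet projection bound, and absorb the resulting polynomial factor in $n$ using the strict gap $\mu_1<\mu<\mu_2$.

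The technical implementations differ in two places, both in part (ii). First, for the sampling inequality the paper proves a separate lemma (its Lemma~\ref{estimates}) via a duality argument with the Dirichlet kernel, obtaining $M_\infty(f_n,1)\le c\,n^2\max_j|f_n(e^{2\pi ij/2^n})|$; you instead appeal directly to the Lebesgue constant for Lagrange interpolation at the $2^n$-th roots of unity, which gives the sharper factor $n$. Both suffice. Second, for the summation the paper first invokes the pointwise inequality $\|f_n\|_{\mu_2}\le M_\infty(f_n,1)(1-r_{\mu_2,n})^{\mu_2}$ (its (3.3), which localises the $\|\cdot\|_{\mu_2}$-norm of a block at its optimal radius), so that $\|f\|_{\mu_2}\le\sum_n\|f_n\|_{\mu_2}$ becomes a single numerical series $\sum_n n^2\,2^{-n(\mu_2-\mu)}$ with no $r$ left. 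Your route keeps $r$ explicit and bounds $(1-r)^{\mu_2}\sum_n n\,r^{2^n}2^{n\mu}$ uniformly via the substitution $t_n=(1-r)2^n$, which is a correct but slightly more laborious computation. The paper's use of (3.3) is the cleaner shortcut here and is worth noting: once you know the norm of a block is essentially attained at a single radius, the uniform-in-$r$ estimate becomes unnecessary.
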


\begin{proposition}\label{isomorphism}
\begin{itemize}
\item[(a)]	$T|_{A^{-\gamma}_+}$ is an isomorphism between $A^{-\gamma}_+$ and $B_{\gamma}$.
	
\item[(b)]	$T|_{A^{-\gamma}_-}$ is an isomorphism between $A^{-\gamma}_-$ and $C_{\gamma}$. \\
\end{itemize}
\end{proposition}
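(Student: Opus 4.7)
My approach is to deduce the proposition from Lemma~\ref{technical} together with the elementary observation that $T$ is injective on $H(\D)$. For injectivity, note that on the $n$-th dyadic block $f_n(z)=z^{2^n}\sum_{k=0}^{2^n-1}a_{k+2^n}z^k$ the map $T$ records the values of a polynomial of degree at most $2^n-1$ at the $2^n$ distinct $2^n$-th roots of unity, which is the invertible discrete Fourier transform; combined with $(Tf)(0)=a_0$ this gives $Tf=0\Rightarrow f=0$. Consequently, the function $f$ produced by Lemma~\ref{technical}(ii) is independent of the auxiliary indices used to invoke the lemma.

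\textbf{Part (a).} For continuity of $T\colon A^{-\gamma}_+\to B_\gamma$, pick any $\mu>\gamma$ and choose $\mu_1\in(\gamma,\mu)$; Lemma~\ref{technical}(i) gives $|||Tf|||_\mu\le d_2\,||f||_{\mu_1}$, and the right-hand side is a continuous seminorm on $A^{-\gamma}_+$. For surjectivity and continuity of the inverse, take $x\in B_\gamma$; for each $\mu_2>\gamma$ choose $\mu\in(\gamma,\mu_2)$ and apply Lemma~\ref{technical}(ii) to obtain $f\in H(\D)$ with $Tf=x$ and $d_1\,||f||_{\mu_2}\le |||x|||_\mu$. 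Injectivity of $T$ forces this $f$ to be the same for every choice of $\mu_2$ and $\mu$, so $f\in\cap_{\mu_2>\gamma}A^{-\mu_2}=A^{-\gamma}_+$, and the displayed estimate simultaneously yields continuity of $T^{-1}$.

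\textbf{Part (b).} This reduces to routine (LB)-space bookkeeping. For continuity of $T\colon A^{-\gamma}_-\to C_\gamma$, fix $\mu_1\in(0,\gamma)$, choose $\mu\in(\mu_1,\gamma)$, and apply Lemma~\ref{technical}(i): $T$ sends $A^{-\mu_1}$ continuously into the Banach space $\{x:|||x|||_\mu<\infty\}$ and hence into $C_\gamma$. The universal property of the inductive limit $A^{-\gamma}_-=\cup_{\mu_1<\gamma}A^{-\mu_1}$ yields continuity on the whole. Conversely, given $x\in C_\gamma$ choose $\mu\in(0,\gamma)$ with $|||x|||_\mu<\infty$ and $\mu_2\in(\mu,\gamma)$; Lemma~\ref{technical}(ii) produces $f\in A^{-\mu_2}\subset A^{-\gamma}_-$ with $Tf=x$ and $d_1\,||f||_{\mu_2}\le |||x|||_\mu$, showing that $T^{-1}$ is continuous from each step $\{x:|||x|||_\mu<\infty\}$ into $A^{-\gamma}_-$; another appeal to the inductive limit's universal property completes the argument.

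\textbf{Main obstacle.} All the real analytic content is concentrated in Lemma~\ref{technical}; once that is granted, the only delicate point in the present argument is noticing that the preimage $f$ furnished by (ii) must be interpreted as a single function serving every parameter choice, which is exactly what the injectivity of $T$ provides.
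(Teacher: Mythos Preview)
Your proof is correct and follows essentially the same route as the paper: both arguments derive everything from Lemma~\ref{technical}, invoke the polynomial-interpolation/DFT observation for injectivity of $T$, and then read off continuity of $T$ and $T^{-1}$ from parts (i) and (ii) of the lemma. Your write-up is in fact more explicit than the paper's about how the auxiliary indices $\mu_1,\mu,\mu_2$ are chosen and why the preimage $f$ in (ii) is independent of those choices; the paper only sketches this and additionally remarks that for (a) one could alternatively get continuity of $T^{-1}$ from the open mapping theorem for Fr\'echet spaces.
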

\begin{proof}
(a) Lemma \ref{technical} (i) shows that $T$ is well defined and continuous. On the other hand, part (ii) implies that $T$ is bijective. For the injectivity observe that the values
$f_n(e^{i 2 \pi j / 2^n})$ are unique, since $f_n(z) / z^{2^n}$ is a polynomial of degree
at most $2^{n}-1$, and its value is taken at $2^n$ different points. See also the Lemma \ref{technical} below. Finally, the estimate in  Lemma \ref{technical}(ii) shows that $T|_{A^{-\gamma}_+}$ is an isomorphism.
The continuity of the inverse can also be deduced by the open mapping theorem for Fr\'echet spaces.

The proof for (b) is similar.
\end{proof}

Proposition \ref{isomorphism} completes the proof of Theorem \ref{sequence_space_rep}.

It remains  \textbf{to prove Lemma \ref{technical}}. Its proof is technical and requires several steps.

First we recall
some basic facts from  classical approximation theory. See \cite{T} and \cite{W}. Let, for $m \in \mathbb{N}$,
\[ D_m(\varphi) = \sum_{j= -m}^m e^{i j \varphi}, \ \ \ \varphi \in [0, 2 \pi], \]
be the Dirichlet kernel and put
\[ (P_mf)(re^{i \varphi} ) = (D_m \ast f)(re^{i \varphi})= \frac{1}{2 \pi} \int_0^{2 \pi} D_m(\varphi- \psi)
f(re^{i  \psi}) d \psi. \]
Then we obtain
\[  (P_mf)(re^{i \varphi})=\sum_{j= -m}^m a_jr^je^{i j \varphi} \ \ \mbox{ provided that } \ \ \
f(re^{i \varphi})=\sum_{j= -\infty}^{\infty} a_jr^je^{i j \varphi}. \]
Let, for $r > 0$, $ M_1(f,r) = (2 \pi)^{-1} \int_0^{2 \pi} |f(re^{i \varphi})| d \varphi$. It is well-known that
\[ D_m \geq 0, \ \ M_1(D_m,1) \leq c \log( m), \ \ M_q(P_mf,r) \leq c \log( m)M_q(f,r) \]
 if  $ q \in \{ 1, \infty \}$.
Here $c > 0$ is a constant independent of $m$.

The following lemma is essentially known. Since we do not have a precise reference we insert a proof which is
a modification of the proof of \cite[II E 9]{W}.

\begin{lemma}\label{estimates}
There is a universal constant $c > 0$ such that, for any f with
$f(z) = \sum_{j= 2^n}^{2^{n+1}-1} a_j z^j$, we have
\[ \sup_{j=1, \ldots, 2^n} |f(e^{i2 \pi \ j/2^n})| \leq M_{\infty}(f,1) \leq c n^2 \sup_{j=1, \ldots, 2^n} |f(e^{i2 \pi \ j/2^n})|. \]
\end{lemma}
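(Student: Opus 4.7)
The lower bound is immediate since each sampling point $e^{i2\pi j/2^n}$ lies on the unit circle, so $|f(e^{i2\pi j/2^n})| \le \sup_{|z|=1}|f(z)| = M_{\infty}(f,1)$.

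For the upper bound my plan is to reduce the problem to polynomial interpolation at roots of unity and then estimate the associated Lebesgue function. Factor $f(z)=z^{2^n}g(z)$ with $g(z)=\sum_{k=0}^{2^n-1}a_{2^n+k}z^k$, a polynomial of degree at most $2^n-1$. Setting $\omega_j:=e^{i2\pi j/2^n}$, the key observations are that $|z^{2^n}|=1$ on the unit circle, so $M_{\infty}(f,1)=M_{\infty}(g,1)$, and $\omega_j^{2^n}=1$, so $f(\omega_j)=g(\omega_j)$. Thus it suffices to bound $M_{\infty}(g,1)$ by $M:=\max_{j=1,\ldots,2^n}|g(\omega_j)|$.

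Since $g$ has degree at most $2^n-1$ and is sampled at $2^n$ equispaced points, discrete Fourier inversion yields the exact interpolation formula
\[
g(e^{i\varphi}) \;=\; \frac{1}{2^n}\sum_{j=0}^{2^n-1} g(\omega_j)\, S_{2^n}\!\bigl(\varphi-2\pi j/2^n\bigr),\qquad S_N(\psi):=\sum_{k=0}^{N-1}e^{ik\psi}=e^{i(N-1)\psi/2}\,\frac{\sin(N\psi/2)}{\sin(\psi/2)}.
\]
Taking absolute values, $M_{\infty}(g,1)\le M\cdot\sup_{\varphi}\frac{1}{2^n}\sum_{j=0}^{2^n-1}|S_{2^n}(\varphi-2\pi j/2^n)|$, so the whole problem reduces to bounding this Dirichlet-type Lebesgue function. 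For this I would fix $\varphi=2\pi j_0/2^n+\delta$ with $|\delta|\le \pi/2^n$, isolate the ``self-term'' $j=j_0$ (for which the trivial bound $|S_{2^n}(\delta)|\le 2^n$ suffices), and for the remaining terms use $|S_{2^n}(\psi)|\le 1/|\sin(\psi/2)|$ together with the elementary inequality $|\sin(\pi k/2^n)|\ge 2k/2^n$ for $1\le k\le 2^{n-1}$. This reduces matters to the harmonic-type sum $\sum_{k=1}^{2^n-1} 2^n/k = O(2^n\, n)$, which after division by $2^n$ gives the desired $O(n)$ bound. Combining everything, $M_{\infty}(f,1)=M_{\infty}(g,1)\le CnM\le cn^2 M$.

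The only real obstacle is the Lebesgue-function estimate, which is slightly delicate near the sample points because the singular term must be handled separately. The target bound $cn^2$ is quite generous: the above argument actually yields the sharper $Cn$, but I would not aim to optimize the constant since the stated lemma only needs $n^2$.
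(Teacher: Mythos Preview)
Your proof is correct and in fact yields the sharper bound $M_\infty(f,1)\le Cn\cdot M$, which you rightly note is more than enough. The approach, however, is genuinely different from the paper's. You write down the Lagrange interpolation formula at the $2^n$-th roots of unity explicitly and bound the associated Lebesgue function $\frac{1}{2^n}\sum_j|S_{2^n}(\varphi-2\pi j/2^n)|$ directly by a harmonic sum, which is the most elementary route.

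The paper instead goes through a duality argument in the style of Wojtaszczyk's book. It first proves a Marcinkiewicz--Zygmund type inequality
\[
\frac{1}{2^n}\sum_{j=1}^{2^n}|g(\varphi_j)|\le cn\,\frac{1}{2\pi}\int_0^{2\pi}|g(\varphi)|\,d\varphi
\]
for trigonometric polynomials of degree $\le 2^n$, using the exact quadrature formula at the nodes and the $L^1$-norm of the Dirichlet kernel. Then, after shifting $f$ to a centered trigonometric polynomial $g$, it dualizes: pick $h\in L^1$ with $\|h\|_1=1$ almost norming $g$ in $L^\infty$, replace $h$ by $D_{2^{n-1}}\ast h$, convert the integral pairing to a discrete sum via quadrature, and apply the displayed inequality together with $\|D_{2^{n-1}}\|_1\le cn$. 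The two factors of $n$ combine to give $cn^2$.

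Your argument is shorter, avoids duality entirely, and loses only one logarithm instead of two. The paper's route has the virtue of deriving the result from two transparent black boxes (quadrature plus $L^1$-Dirichlet bound), which connects it to the classical approximation-theory toolkit already in use elsewhere in the paper; but for this lemma in isolation your direct computation is the cleaner proof.
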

\begin{proof}
Let $\varphi_j = 2 \pi \ j/2^n$, $j=1, \ldots, 2^n$. For functions $g$ of the form $g( \varphi) = \sum_{k=- 2^n}^{2^{n}} b_k \exp(i k \varphi)$ we have, since
$ \sum_{j=1}^{2^n} \exp (i 2 \pi k j /2^n ) =0 $ for $k \not=0$,
\[ (3.1) \hspace{3cm} \frac{1}{2^n} \sum_{j=1}^{2^n} g(\varphi_j)= b_0 = \frac{1}{2 \pi}\int_0^{2 \pi} g(\varphi)
d \varphi. \hspace{6cm} \]
 We claim
 \[ (3.2) \hspace{3cm}\frac{1}{2^n} \sum_{j=1}^{2^n}| g(\varphi_j)| \leq c n  \frac{1}{2 \pi}\int_0^{2 \pi} |g(\varphi)|
 d \varphi  \hspace{5cm} \]
 where $c > 0$ is a universal constant. Indeed, we have $D_{2^n} \ast g = g$ and hence, using (3.1), we conclude
 \begin{eqnarray*}
 	\frac{1}{2^n} \sum_{j=1}^{2^n}| g(\varphi_j)|  & =      & \frac{1}{2^n} \sum_{j=1}^{2^n}
  |\frac{1}{2 \pi}\int_0^{2 \pi} D_{2^n}(\varphi_j - \psi)g(\psi)d \psi|  \\
 	                                              & \leq   &	
 \frac{1}{2 \pi}\int_0^{2 \pi}\frac{1}{2^n} \sum_{j=1}^{2^n} D_{2^n}(\varphi_j - \psi)|g(\psi)|d \psi  \\
                                                  & =      &
 \frac{1}{2 \pi}\int_0^{2 \pi}\frac{1}{2\pi}\int_0^{2^n}D_{2^n}(\varphi -\psi)d\varphi        |g(\psi)| d\psi                                    \\
                                                  & \leq   &
  c n \frac{1}{2 \pi}\int_0^{2 \pi} |g(\psi)|
  d \psi.
 \end{eqnarray*}
Now take $f$ as in the statement and put
\[g(e^{i \varphi}) = e^{-i3 \cdot2^{n-1} \varphi} f(e^{i \varphi}) = \sum_{j=-2^{n-1}}^{2^{n-1}-1} a_{j+3\cdot2^{n-1}}
e^{i j \varphi}. \]
We use that $l \cdot g$ is a trigonometric polynomial of degree $2^n$ if $l$ is a trigonometric polynomial of
degree $2^{n-1}$.

For each $\varepsilon>0$, we choose  $h \in L_1( \partial \mathbb{D})$ such that $M_1(h,1) = 1$ and $ \frac{1}{1 + \varepsilon} M_{\infty}(g,1) \leq  | \int_0^{2 \pi} h(e^{i \varphi})g(e^{i \varphi}) d \varphi|$.
Then, using (3.2), we get
\begin{eqnarray*}
 \frac{1}{1 + \varepsilon} M_{\infty}(f,1) & =    & \frac{1}{1 + \varepsilon} M_{\infty}(g,1) \\
                 & =   & | \int_0^{2 \pi} h(e^{i \varphi})g(e^{i \varphi}) d \varphi| \\
                 & =    &| \int_0^{2 \pi} (D_{2^{n-1}}h)(e^{i \varphi})g(e^{i \varphi}) d \varphi| \\
                 & =    &   |\frac{1}{2^n} \sum_{j=1}^{2^n}(D_{2^{n-1}}h)(e^{i \varphi_j})g(e^{i \varphi_j})| \\
                 & \leq &  \frac{1}{2^n} \sum_{j=1}^{2^n}|(D_{2^{n-1}}h)(e^{i \varphi_j})| \cdot|g(e^{i \varphi_j})| \\
                 & \leq & c n\int_0^{2 \pi} |(D_{2^{n-1}}h)(e^{i \varphi})| d \varphi  \sup_j |g(e^{i \varphi_j})| \\
                 & \leq & c^2n^2 M_1(h,1)\sup_j |g(e^{i \varphi_j})| \\
                 & =    & c^2n^2\sup_j |f(e^{i \varphi_j})|,
                 \end{eqnarray*}
                 
where the third equality follows from
the restriction of the degree of $g$ and the usual orthonormality relations.

Since $\varepsilon$ is arbitrary, this proves the right-hand side inequality of the statement. The left-hand side is trivial.
\end{proof}

\textbf{Completion of the proof of Lemma \ref{technical}.}
   We consider $r_{\mu,n}= 1 - \mu/(2^n+ \mu)$ for given $\mu > 0$. The function
$r^{2^n}(1-r)^{\mu}$ attains its maximum at  $r_{\mu,n}$. Let
$f(z) = \sum_{j=0}^{\infty} a_j z^j \in H(\D)$ and  $ f_n(z) = \sum_{j=2^n}^{2^{n+1}-1} a_jz^j$.
It suffices to consider the case $f(0) = a_0=0$. Put $g_n (z) =  \sum_{j=0}^{2^n-1}
a_{j+2^n}z^j$. 
We obtain, for $r < r_{\mu,n}$,
\begin{eqnarray*}
M_{\infty}(f_n,r) (1-r)^{\mu} & \leq   &   \frac{r^{2^n} (1-r)^{\mu}}{r_{\mu,n}^{2^n}(1-
r_{\mu,n})^{\mu}} M_{\infty}(g_n,r) r_{\mu,n}^{2^n}(1-r_{\mu,n})^{\mu} \\
 & \leq &   M_{\infty}(g_n,r_{\mu,n}) r_{\mu,n}^{2^n}(1-r_{\mu,n})^{\mu}
 \\   & \leq  &
M_{\infty}(f_n,r_{\mu,n})(1-r_{\mu,n})^{\mu} \\
 & \leq  &     M_{\infty}(f_n,1 )(1-r_{\mu,n})^{\mu}.
\end{eqnarray*}
We have for  $r_{\mu,n} < s < 1$,
 \begin{eqnarray*}
M_{\infty}(f_n,s) (1-s)^{\mu}  \leq
M_{\infty}(f_n,1) (1-r_{\mu,n})^{\mu}
\end{eqnarray*}
and combining this with the previous estimate yields
\[ (3.3) \hspace{1cm} ||f_n||_{\mu}
\leq  M_{\infty}(f_n,1)(1-r_{\mu,n})^{\mu}. \hspace{5cm}
\]

Moreover we have, by \cite[Lemma 3.1.(a)]{L}, 
\begin{eqnarray*}
(3.4) \hspace{1cm}
M_{\infty}(f_n,1)
& \leq & \left(\frac{1}{r_{\mu,n}} \right)^{2^{n+1}}
M_{\infty}(f_n,r_{\mu,n})  \\
&=& (1+ \frac{\mu}{2^n})^{2^{n+1}}
M_{\infty}(f_n,r_{\mu,n})\\
&\leq &
c_1  M_{\infty}(f_n,r_{\mu,n})
 \end{eqnarray*}
for a universal constant $c_1$.  	

Now let $ \mu_1 < \mu < \mu_2$. In view of (3.4) we have
 	\begin{eqnarray*}
 \lefteqn{ \sup_n\frac{\mu^{\mu}}{(2^n+\mu)^{\mu}}
  \sup_{2^n \leq j < 2^{n+1}}|f_n(e^{i 2 \pi j/2^n})| } \\
  & &  \leq  \sup_n\frac{\mu^{\mu}}{(2^n+\mu)^{\mu}} M_{\infty}(f_n,1) \\
  & & \leq  c_1 \sup_n\frac{\mu^{\mu}}{\mu_1^{\mu_1}} \frac{(2^n+\mu_1)^{\mu_1}}{(2^n+\mu)^{\mu}}  \frac{\mu_1^{\mu_1}}{(2^n+\mu_1)^{\mu_1}} M_{\infty}(f_n,r_{\mu_1,n}) \\
   & &  \leq
 c_1   \sup_n \delta_n ||f_n||_{\mu_1} \\
   & &     =
    c_1 \sup_n \delta_n || (P_{2^{n+1}-1}-P_{2^n-1})f ||_{\mu_1} \\
   & &     \leq
    c_1c_2 \sup_n \delta_n n ||f||_{\mu_1}
      \end{eqnarray*}
  where
  \[ \delta_n =\frac{\mu^{\mu}}{\mu_1^{\mu_1}} \frac{(2^n+\mu_1)^{\mu_1}}{(2^n+\mu)^{\mu}}  \]
  and $c_1$, $c_2$ are universal constants.
  	Since $\mu > \mu_1$ we obtain $ \sup_n \delta_n n < \infty$. This proves part (i).

   On the other hand, with Lemma \ref{estimates} and (3.3) applied to $\mu_2$ we obtain
   	 \begin{eqnarray*}
   	 ||f||_{\mu_2}  & \leq &  \sum_{n=0}^{\infty} ||f_n||_{\mu_2} \\
   	                & \leq & \sum_{n=0}^{\infty}   (1-r_{\mu_2,n})^{\mu_2}  M_{\infty}(f_n,1)     \\
   	                & \leq &  c    \sum_{n=0}^{\infty} \frac{\mu_2^{\mu_2}}{\mu^{\mu}} \frac{(2^n+\mu)^{\mu}}{(2^n+\mu_2)^{\mu_2}} \frac{\mu^{\mu}}{(2^n+\mu)^{\mu}}n^2
\sup_{2^n \leq j < 2^{n+1}}|f_n(e^{i 2 \pi j/2^n})| \\
                    & \leq & d \sup_n\frac{\mu^{\mu}}{(2^n+\mu)^{\mu}}
                    \sup_{2^n \leq j < 2^{n+1}}|f_n(e^{i 2 \pi j/2^n})|
  	                \end{eqnarray*}
                  where
 \[ d =c    \sum_{n=0}^{\infty} \frac{\mu_2^{\mu_2}}{\mu^{\mu}}\frac{(2^n+\mu)^{\mu}}{(2^n+\mu_2)^{\mu_2}}n^2. \]
 	Since $\mu_2 > \mu$ this series converges.

On account that dim $\{f_n : f \in A^{-\gamma}_+ \} = 2^n=$ number of the elements $\exp(i2 \pi j/2^n)$ if $j=2^n, \ldots, 2^{n+1}-1$, given $x=(x_j)$, the polynomials $f_n$ with  $f_n( e^{i2 \pi j/2^n})= x_j$ if $2^n \leq j \leq 2^{n+1}-1$ are uniquely defined. Consequently, the estimates above imply statement (ii).
 	
  The proof of Lemma \ref{technical} is now complete.

\bigskip

\noindent \textbf{Acknowledgements.} The research of Bonet was partially
supported by the project  MTM2016-76647-P. The research of Taskinen was
partially supported by the V\"ais\"al\"a Foundation of the Finnish Academy
of Sciences and Letters.


\bibliographystyle{amsplain}

\begin{thebibliography}{99}

\bibitem{ABR} A.A. Albanese, J. Bonet, W.J. Ricker, The Ces\`aro operator on Korenblum type  spaces of analytic functions, Collect. Math. (to appear), DOI: 10.1007/s13348-017-0205-7.


\bibitem{BG} C.A. Berenstein, R. Gay, \textit{Complex Variables. An Introduction}, Springer, New York, 1991.

\bibitem{BBG} K.D.\ Bierstedt, J.\ Bonet, A.\ Galbis, Weighted spaces
	of holomorphic functions on bounded domains, Michigan
Math.\ J.\ 40 (1993), 271--297.

\bibitem{BBT} K.D.\ Bierstedt, J.\ Bonet, J.\ Taskinen, Associated
	weights and spaces of holomorphic functions, Studia Math. 	 \textbf{127} (1998), 137--168.

\bibitem{BMS} K.D.\ Bierstedt, R. Meise, W.H. Summers, Köthe sets and Köthe sequence spaces. Functional analysis, holomorphy and approximation theory (Rio de Janeiro, 1980), pp. 27--91, North-Holland Math. Stud., 71, North-Holland, Amsterdam-New York, 1982.

\bibitem{Kor}  B. Korenblum, An extension of the Nevanlinna theory, Acta Math. \textbf{135} (1975), 187--219.

\bibitem{HKZ} H. Hedenmalm, B. Korenblum, K. Zhu, \textit{Theory of Bergman Spaces.} Grad. Texts in Math. \textbf{199}, Springer-Verlag, New York, 2000.

\bibitem{J} H. Jarchow, \textit{Locally Convex Spaces},  Teubner, Stuttgart,  1981.

\bibitem{LT} J.Lindenstrauss, L.Tzafriri, Classical Banach spaces I, Springer, Berlin, 1977.

\bibitem{L} W. Lusky, On the Fourier series of unbounded harmonic functions, J. London Math. Soc. 61 (2000), 568-580.

\bibitem{MV} R. Meise, D. Vogt, \textit{Introduction to Functional Analysis.} Clarendon Press, Oxford, 1997.

\bibitem{SW} A.L. Shields, D.L. Williams, Bounded projections, duality
	and multipliers in spaces of analytic functions, Trans.
Amer. Math. Soc. 162 (1971), 287-302.

\bibitem{T} A. Torchinsky, \textit{Real-variable methods in harmonic analysis},  Academic Press, New York, 1986

\bibitem{W} P. Wojtaszczyk, \textit{Banach spaces for analysts}, Cambridge University Press, New York, 1991


\end{thebibliography}

\noindent \textbf{Authors' addresses:}%
\vspace{\baselineskip}%

Jos\'e Bonet: Instituto Universitario de Matem\'{a}tica Pura y Aplicada IUMPA,
Universitat Polit\`{e}cnica de Val\`{e}ncia,  E-46071 Valencia, Spain

email: jbonet@mat.upv.es \\

Wolfgang Lusky: FB 17 Mathematik und Informatik, Universit\"at Paderborn, D-33098 Paderborn, Germany.

email: lusky@uni-paderborn.de \\

Jari Taskinen: Department of Mathematics and Statistics, P.O. Box 68,
University of Helsinki, 00014 Helsinki, Finland.

email: jari.taskinen@helsinki.fi

\end{document}